\ifdefined\reviewversion
  \documentclass[lineno,pdflatex,sn-nature]{sn-jnl}
\else
  \documentclass[pdflatex,sn-nature]{sn-jnl}
\fi

\usepackage[utf8]{inputenc}
\usepackage[T1]{fontenc}
\usepackage{amsmath,amssymb,amsfonts,amsthm,bm}
\usepackage{booktabs,array}
\usepackage{graphicx}
\usepackage{microtype}
\usepackage{url}
\usepackage{xcolor}
\usepackage{siunitx}
\newcommand{\R}{\mathbb{R}}
\newcommand{\cond}{\kappa}
\newcommand{\norm}[1]{\left\lVert #1 \right\rVert}

\theoremstyle{thmstyleone}
\newtheorem{proposition}{Proposition}

\unnumbered

\begin{document}

\title[Verifier-guided discovery of mimetic operators]{Verifier-guided discovery of exact high-order mimetic operators with large language models}

\author*[1,2,4]{\fnm{J.} \spfx{de} \sur{Curt\`o}}\email{jdecurto@icai.comillas.edu}
\author[3,4]{\fnm{I.} \spfx{de} \sur{Zarz\`a}}

\affil*[1]{\orgdiv{Department of Computer Applications in Science \& Engineering}, \orgname{BARCELONA Supercomputing Center}, \orgaddress{\city{Barcelona}, \country{Spain}}}
\affil[2]{\orgdiv{Escuela Técnica Superior de Ingeniería (ICAI)}, \orgname{Universidad Pontificia Comillas}, \orgaddress{\city{Madrid}, \country{Spain}}}
\affil[3]{\orgdiv{Human centered AI, Data \& Software}, \orgname{LUXEMBOURG Institute of Science and Technology (LIST)}, \orgaddress{\city{Esch-sur-Alzette}, \country{Luxembourg}}}
\affil[4]{\orgdiv{Estudis d'Informàtica, Multimèdia i Telecomunicació}, \orgname{Universitat Oberta de Catalunya}, \orgaddress{\city{Barcelona}, \country{Spain}}}

\hypersetup{pdftitle={Verifier-guided discovery of exact high-order mimetic operators with large language models},pdfauthor={J. de Curt\`o and I. de Zarz\`a}}

\abstract{Designing a high-order structure-preserving discretization is a constrained mathematical search: conservation, a positive discrete inner product, physical spectral behavior, boundary accuracy, bandwidth, and partial differential equation (PDE) error must hold simultaneously. We test whether large language models (LLMs) can help while remaining non-authoritative. The motivating MOLE implementation of the Corbino--Castillo staggered operators satisfies a general discrete Gauss identity and conserves exactly, yet its order-six and order-eight Dirichlet blocks develop four non-real boundary-localized modes. An endpoint-supported positive-definite identity would instead force a real non-positive spectrum, so the search changes the closure and norm architecture. An LLM proposes only a typed construction program; a deterministic linear-program compiler generates coefficients; an independent verifier tests algebra, positivity, physical modes, conditioning, and manufactured PDEs; and coupled rational reconstruction provides exact certificates. Across 1,200 solver evaluations, an externally fixed verifier accepted 55.0\% of full-metric-feedback proposals and 53.3\% of illumination-archive proposals, versus 13.3\% for uniform random search. Four leading LLM-originated programs were reconstructed exactly. The strongest order-six-interior, order-four-boundary candidate lowers the prior positive-diagonal spectral-radius constant by 25.4\% and its PDE error 62.7-fold. Its certified heat-equation energy is contractive, whereas the order-six reference exhibits 6.6\% transient growth; both have the same RK4 stability limit. The LLM proposes structural hypotheses; deterministic mathematics determines validity.}

\keywords{large language models, mimetic finite differences, structure-preserving discretization, summation by parts, verifier-guided search, exact certification}

\maketitle

\section{Introduction}
High-order numerical schemes are usually presented as coefficient tables, but the difficult task is to discover a construction that satisfies several requirements at once. A boundary closure can be accurate yet destroy an energy identity; a positive norm can produce an unacceptable spectral radius; a real spectrum can hide spurious low modes; and an algebraically valid operator can still solve a simple Poisson problem poorly. Operator design is therefore a search over representations, constraints, and objectives, followed by independent verification.

Mimetic finite differences make this tension explicit because they reproduce discrete analogues of integration by parts, conservation, and compatible differential mappings \cite{castillo2003matrix,corbino2020high,lipnikov2014mimetic,bochev2006principles}. The same philosophy underlies summation-by-parts (SBP) operators \cite{strand1994summation,mattsson2013block,mattsson2014optimal}. Earlier work formalized the relevant discrete spaces, kernels, images, and quotient mappings \cite{decurto2024isomorphic}, and subsequently studied spectral and stochastic perturbation behavior \cite{decurto2024spectral}. Algorithmic SBP construction has already shown that feasibility and spectral optimization can be separated from manual coefficient design \cite{albin2016algorithmic}. The open question here is whether an LLM \cite{trinh2024alphageometry,wang2023scientific} can help choose the \emph{construction architecture} once the design space contains norm classes, asymmetric closures, row-dependent supports, and continuous objective weights.

\subsection{Problem and fixed-closure obstruction}
Let $[0,1]$ be divided into $m$ cells. The staggered scalar and vector spaces are $F_h\cong\R^{m+2}$ and $V_h\cong\R^{m+1}$, with divergence $D:V_h\rightarrow F_h$, gradient $G:F_h\rightarrow V_h$, and Laplacian $L=DG$. For homogeneous Dirichlet data, eliminating the two boundary scalar unknowns gives an exact interior block $L_I\in\R^{m\times m}$.

Let symmetric positive-definite matrices $Q$ and $P$ define scalar and vector inner products. A general matrix extended-Gauss identity has the form
\begin{equation}
QD+(PG)^T=B,
\label{e:egd}
\end{equation}
where $B$ is a discrete boundary operator. Let $E\in\R^{(m+2)\times m}$ inject interior scalar unknowns, so $L_I=E^TDGE$ and $Q_I=E^TQE$. We assume that the scalar norm respects the Dirichlet elimination, $E^TQ=Q_IE^T$, as it does for every compiled candidate.

\begin{proposition}
If $Q=Q^T\succ0$ and $P=P^T\succ0$ satisfy \eqref{e:egd}, $E^TQ=Q_IE^T$, and $E^TBGE=0$, then $L_I$ is similar to a symmetric non-positive matrix; hence its spectrum is real and non-positive.
\end{proposition}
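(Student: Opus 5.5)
The plan is to show that $Q_I L_I$ is symmetric negative semidefinite and then symmetrize with $Q_I^{1/2}$. Since $Q\succ0$ and $E$ has full column rank (it is an injection), $Q_I=E^TQE$ is symmetric positive definite, so $Q_I^{1/2}$ and $Q_I^{-1/2}$ exist. The goal is to express $Q_I L_I$ as $-(GE)^TP(GE)$.

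First, use the elimination compatibility to move $Q_I$ inside:
\[
Q_I L_I = Q_I E^T D G E = E^T Q D G E .
\]
Next, substitute the extended-Gauss identity \eqref{e:egd} in the form $QD = B - G^T P$, using $P=P^T$. This gives
\[
Q_I L_I = E^T B G E - E^T G^T P G E = -(GE)^T P (GE),
\]
where the boundary term vanishes by the hypothesis $E^TBGE=0$. The right-hand side is symmetric, and since $P\succ0$ it is negative semidefinite: $x^T Q_I L_I x = -\norm{P^{1/2}GEx}^2\le 0$.

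Finally, set $S = Q_I^{1/2} L_I Q_I^{-1/2}$, which is similar to $L_I$. Then
\[
S = Q_I^{-1/2}\,(Q_I L_I)\,Q_I^{-1/2} = -\big(P^{1/2}GEQ_I^{-1/2}\big)^T\big(P^{1/2}GEQ_I^{-1/2}\big),
\]
so $S$ is symmetric and non-positive (negative semidefinite) by congruence. Hence $L_I$ has real, non-positive spectrum, and is moreover diagonalizable.

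There is no real obstacle here. The only points needing care are the bookkeeping of transposes in $(PG)^T=G^TP$, and confirming that $E^TQ=Q_IE^T$ is used with the correct side so that $Q_I$ passes through $E^T$.
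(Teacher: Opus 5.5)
Your proof is correct and follows the paper's argument: you use $E^TQ=Q_IE^T$ to write $Q_IL_I=E^TQDGE$, substitute the extended-Gauss identity to obtain $Q_IL_I=E^TBGE-(GE)^TP(GE)$, drop the boundary term, and symmetrize. The only difference is cosmetic: you conjugate by $Q_I^{1/2}$ where the paper uses a Cholesky factor $R^TR=Q_I$, and you state explicitly that $Q_I\succ0$ because $E$ has full column rank, which the paper leaves implicit.
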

\begin{proof}
Multiplying \eqref{e:egd} by $G$ and restricting with $E$ gives $Q_I L_I=-S+E^TBGE$, where $S=E^TG^TPGE=S^T\succeq0$; the first equality uses $E^TQ=Q_IE^T$. Under the stated support condition the last term vanishes. If $R^TR=Q_I$, then $R L_I R^{-1}=-R^{-T}SR^{-1}$, which is symmetric non-positive.
\end{proof}

The compiler below imposes the endpoint selector $B_0=-e_1e_1^T+e_{m+2}e_{m+1}^T$, for which $E^TB_0GE=0$. The reference arrays are instead the MOLE implementation of the Corbino--Castillo construction \cite{corbino2020high,corbino2024mole}, not the distinct parameterized Castillo--Grone matrix-analysis family \cite{castillo2003matrix}. They satisfy a general extended-Gauss identity with $B_{\mathrm{CC}}=QD+G^TP$, but this boundary operator extends over closure rows and $E^TB_{\mathrm{CC}}GE\ne0$. Their four non-real modes at orders six and eight (Fig.~\ref{fgr:reference_spectra}) therefore do not contradict the Corbino--Castillo identity. Rather, they show unconditionally that the fixed Dirichlet blocks admit no SPD self-adjoint similarity. Under the proposition's norm-separation hypothesis, they also rule out the stricter endpoint-supported identity with positive-definite weights. The search must change the closure, while block norms enlarge the feasible class for the new constructions.

\begin{figure}[!htbp]
\centering
\includegraphics[width=0.88\textwidth]{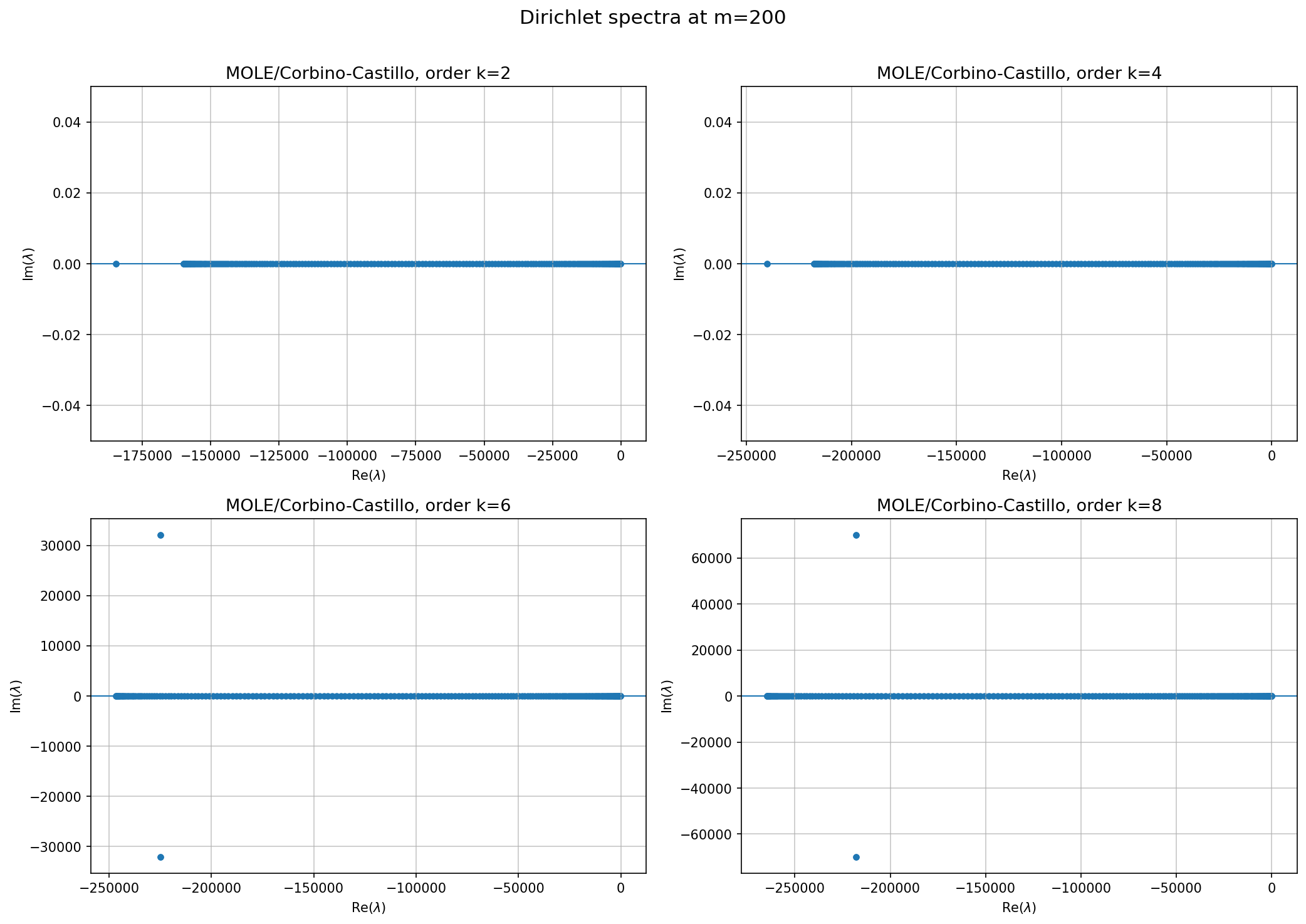}
\caption{\textbf{Reference spectra expose the Dirichlet-symmetry obstruction.} Dirichlet spectra of the MOLE/Corbino--Castillo operators at $m=200$. Orders $k=2$ and $k=4$ are real; orders $k=6$ and $k=8$ contain four non-real eigenvalues. Their general discrete Gauss identity remains valid; the non-real modes show that its closure-supported boundary term does not induce an SPD self-adjoint Dirichlet block.}
\label{fgr:reference_spectra}
\end{figure}

\subsection{Verifier-guided search}
LLMs are useful only if they do not become mathematical authorities. A model can propose a familiar stencil, invent a proof, or return coefficients that approximately satisfy a constraint. We instead ask it to emit a typed program: target order, diagonal or local block norm, reflected or independent boundary closures, row-specific support widths, and an optimization objective. A deterministic compiler solves for coefficients; a separate verifier tests algebraic identities, norm positivity, physical eigenmodes, conditioning, and PDE accuracy; and exact rational reconstruction certifies selected results (Fig.~\ref{fgr:pipeline}).

This division of labor follows evaluator-driven program search such as FunSearch \cite{romeraparedes2024funsearch} and AlphaEvolve \cite{novikov2025alphaevolve}, but the accepted object here is constrained by exact domain identities rather than an arbitrary score. A candidate is rejected if it has one spurious physical mode or one failed affine identity, regardless of its utility.

Our contribution is threefold. First, we define an open program grammar in which the model proposes a construction rather than a point in an exhaustively solved grid. Second, we combine deterministic linear compilation with an external promotion verifier and coupled exactification. Third, we compare live multi-model search with uniform random programs, a lightweight surrogate, and an expert-informed prior over 1,200 deterministic solver calls, while reporting cost, repeated-seed yield, mesh transfer, and robustness. The resulting claim is deliberately bounded: LLMs propose promising structural regions; deterministic mathematics produces and validates the operators.

\begin{figure}[!htbp]
\centering
\includegraphics[width=0.95\textwidth]{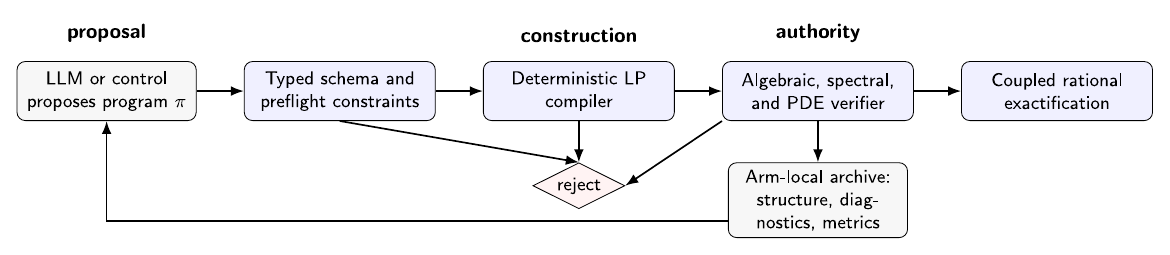}
\caption{\textbf{Verifier-guided discovery separates proposal from authority.} The LLM controls only the structural program. Coefficients, validity, and exact certification are deterministic. Rejected proposals can inform later calls but never become mathematical evidence.}
\label{fgr:pipeline}
\end{figure}

\section{Results}
\subsection{Verifier feedback increases reliable yield}
The common external verifier accepted $66/120=55.0\%$ of full-metric-feedback programs and $64/120=53.3\%$ of illumination-archive programs, compared with $32/240=13.3\%$ for uniform random search (Fig.~\ref{fgr:search}; Table~\ref{t:yield}). Both feedback packages remained significant after Holm correction ($p=0.0117$). The expert-informed prior reached 50.0\%, showing that a strong human construction prior remains competitive. The ExtraTrees surrogate reached 23.8\%, whereas structure-only and no-archive LLM conditions did not significantly exceed random search after correction.

At the common 12-call budget, the expert-informed prior was the only arm whose best-so-far area under the curve remained significant after correction. Thus the evidence supports a \emph{yield} advantage for verifier feedback, not the stronger claim that LLM search dominates expert search. With ten paired seeds, exact sign-flip randomization is discrete: the $2^{10}$ sign assignments yield raw $p$-values on a $1/1024$ grid, with two-sided floor $2/1024$. Equal adjusted values therefore do not imply equal effects; magnitude must be read from rates, intervals, and trajectories.

\begin{figure}[!htbp]
\centering
\includegraphics[width=0.88\textwidth]{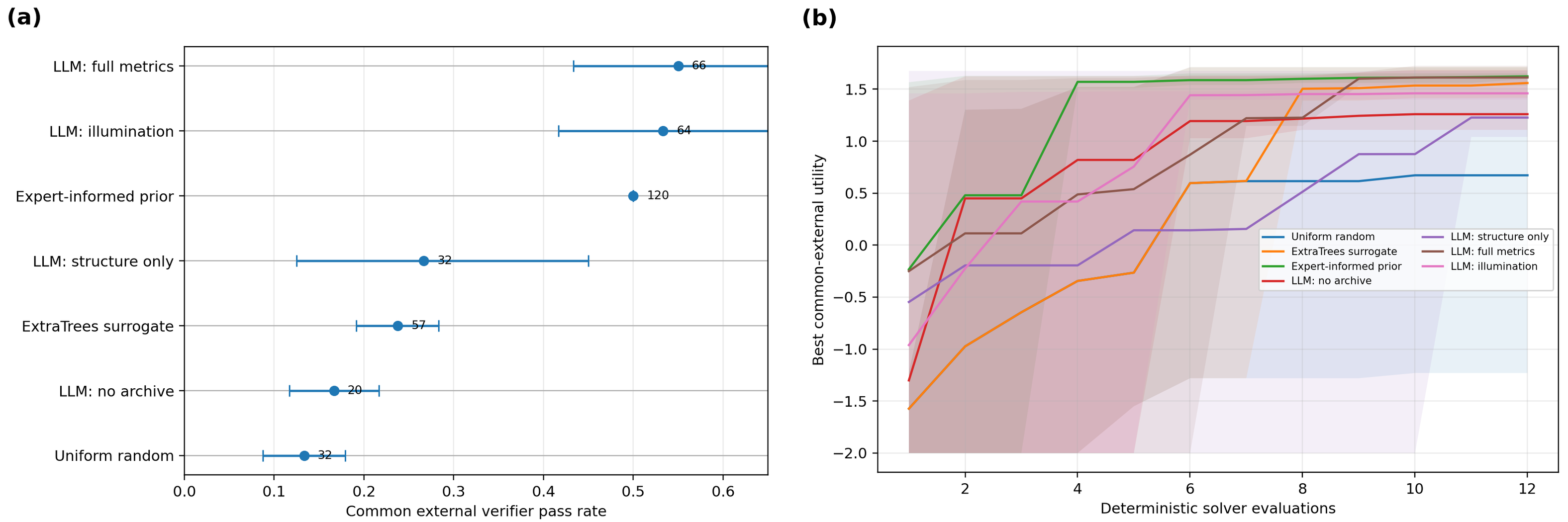}
\caption{\textbf{Feedback changes verified yield and best-so-far utility.} \textbf{(a)} Common external pass rate with seed-level bootstrap intervals; numbers indicate accepted solver calls. The expert prior uses distinct seeded sequences but yields exactly $12/24$ passes in every seed. \textbf{(b)} Best common-external utility versus deterministic solver evaluations, truncated to the common 12-call budget. Lines are seed means and bands are the 10th--90th percentiles.}
\label{fgr:search}
\end{figure}

\begin{table}[!htbp]
\caption{\textbf{Search yield under the common external verifier.} The $p$-values compare seed-level pass rates with uniform random search by exact paired sign-flip randomization tests followed by Holm correction. Passes per million tokens are computed from the complete raw generation log, including failed and repair generations.}
\label{t:yield}
\centering
\small
\begin{tabular}{@{}lrrrr@{}}
\toprule
Strategy & Calls & Passes & Rate (\%) & Holm $p$ \\
\midrule
LLM: full metrics & 120 & 66 & 55.0 & 0.0117 \\
LLM: illumination & 120 & 64 & 53.3 & 0.0117 \\
Expert-informed prior & 240 & 120 & 50.0 & 0.0117 \\
LLM: structure only & 120 & 32 & 26.7 & 0.3281 \\
ExtraTrees surrogate & 240 & 57 & 23.8 & 0.0117 \\
LLM: no archive & 120 & 20 & 16.7 & 0.4141 \\
Uniform random & 240 & 32 & 13.3 & -- \\
\bottomrule
\end{tabular}

\vspace{4pt}
\footnotesize Full metrics was the most cost-efficient LLM condition: 85.1 externally verified programs per million tokens, compared with 80.2 for illumination, 45.7 for structure only, and 37.9 for no archive.
\end{table}

A randomized diagnostic-refinement offer did not explain the feedback effect. Conditional on a preceding failed proposal, the external verifier accepted $20/42$ refined and $25/55$ fresh proposals (Fisher $p=0.840$). Among 638 LP-infeasible programs with an identified binding group, 637 (99.8\%) failed the endpoint-supported extended-Gauss compatibility equations. The dominant obstacle is therefore global structural compatibility, not a local defect that a one-step textual repair can usually fix.

A later engineering follow-up reached 98.6\% schema-valid output but only 59.1\% distinct programs; duplicate collapse and broken model--role pairing make its archive-versus-diagnostic pattern mechanism-generating rather than inferential evidence.

\subsection{Exact constructions recover weighted spectral structure}
The best program from each LLM archive condition was independently recompiled and reconstructed exactly. All four order-six-interior, order-four-boundary constructions pass the coupled affine identities, direct positive-definiteness checks, physical-mode tests, and transfer through $m=640$ (Table~\ref{t:exact}). The strongest numerical trade-off came from the structure-only GLM-5.2 proposal. Its coupled reconstruction uses denominator $2^{36}$ and changes the floating-point LP solution by at most $1.27\times10^{-9}$.

At $m=160$, this candidate has
\begin{equation}
\rho(L)h^2=6.165962,\qquad \cond_Q=1.830782,
\end{equation}
no extra low modes, no non-real eigenvalues, and mixed-frequency manufactured-PDE RMS error $8.91\times10^{-8}$. Relative to the earlier positive-diagonal $6$--$3$ candidate, it lowers the spectral-radius constant by 25.4\% and the PDE error by a factor of 62.7. The MOLE/Corbino--Castillo reference remains about 69 times more accurate on this manufactured problem, but it retains four non-real modes and has no valid certified symmetrizing-norm condition. The new operator is therefore a distinct accuracy--structure trade-off, not a uniform improvement over the reference.

\begin{table}[!htbp]
\caption{\textbf{Exact LLM-originated constructions and order-six comparison at $m=160$.} ``Exact grid'' is the dyadic denominator used in coupled reconstruction. The MOLE/Corbino--Castillo norm ratio is not reported as a certified $\kappa_Q$: its Cholesky-similarity asymmetry diagnostic is 0.481 at $m=160$. This max-norm diagnostic is distinct from the two-norm residual $r_Q$ used in Table~\ref{t:solver}.}
\label{t:exact}
\centering
\small
\begin{tabular}{@{}llrrrr@{}}
\toprule
Origin/operator & Model & $\rho h^2$ & Certified $\kappa_Q$ & PDE RMS & Exact grid \\
\midrule
No archive & MiniMax-M3 & 6.166146 & 2.139 & $2.68\times10^{-7}$ & $2^{44}$ \\
Structure only & GLM-5.2 & 6.165962 & 1.831 & $8.91\times10^{-8}$ & $2^{36}$ \\
Full metrics & MiniMax-M3 & 6.165968 & 1.959 & $1.29\times10^{-7}$ & $2^{44}$ \\
Illumination & MiniMax-M3 & 6.165961 & 1.857 & $5.11\times10^{-6}$ & $2^{36}$ \\
\midrule
MOLE/CC order 6 & -- & 6.165968 & n/a & $1.30\times10^{-9}$ & -- \\
Prior diagonal $6$--$3$ & -- & 8.266454 & 2.057 & $5.59\times10^{-6}$ & exact \\
\bottomrule
\end{tabular}
\end{table}

All four exact LLM-originated candidates remain real and free of extra low modes over $m\in\{40,64,80,96,120,160,200,320,480,640\}$. The short search-window rate can be pre-asymptotic: the leading candidate falls from an apparent 6.34 over $m=40,80,160$ to 4.67 over $m=160,320,640$, consistent with sixth-order interior and fourth-order boundary accuracy. The reference reaches a round-off floor near $m=480$, so its final apparent rate is not interpreted (Fig.~\ref{fgr:validation}).

\begin{figure}[!htbp]
\centering
\includegraphics[width=0.87\textwidth]{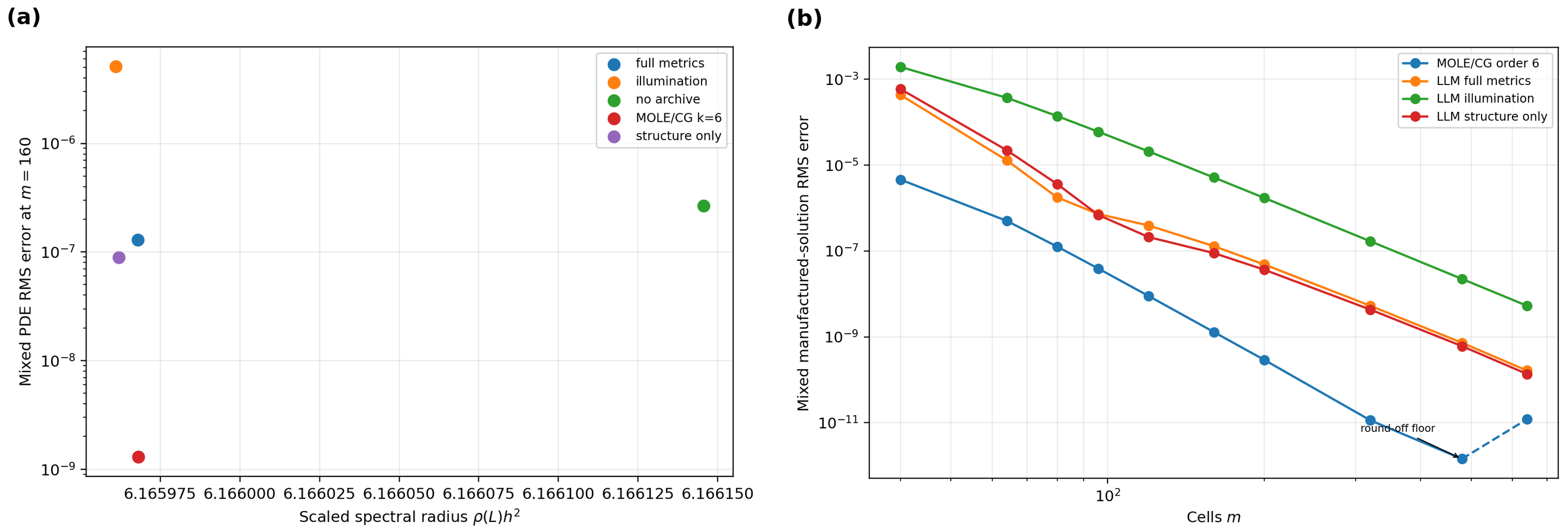}
\caption{\textbf{Exact candidates occupy a distinct accuracy--structure trade-off.} \textbf{(a)} Spectral-radius/PDE-error comparison at $m=160$. The reference is more accurate, whereas the block-norm candidates restore a real weighted spectrum at nearly the same spectral-radius constant. \textbf{(b)} Mixed-frequency PDE refinement through $m=640$. Candidate late-window rates are lower than the pre-asymptotic search estimates. The final reference segment is dashed because round-off dominates.}
\label{fgr:validation}
\end{figure}

Under boundary-supported perturbations normalized by $\norm{\Delta L}_2=\gamma\norm{L}_2$, the mean worst relative eigenvalue drift at $\gamma=0.08$ decreases from 0.01294 for the order-six reference to 0.01149 for the leading exact candidate, an approximately 11\% improvement. This is a targeted Monte Carlo diagnostic rather than a general robustness proof.

Certification also changes the semidiscrete dynamics. For $\dot u=L_Iu$ and $E_Q(u)=\tfrac12u^TQ_Iu$, the exact identity gives $\dot E_Q=-u^T(G^TPG)_{I,I}u\le0$. At $m=200$, the candidate is contractive and its normalized weighted-symmetry residual is $1.2\times10^{-16}$, versus $0.52$ for the order-six reference under its native positive quadrature. The reference's non-real spectrum precludes an SPD self-adjoint similarity, whereas the candidate admits one. The reference reaches 1.066 transient energy amplification at $t/h^2\approx0.333$, an interior maximum over $[0,0.6]$ after which it decays, with $h^2\mu_{\max}=0.328$. Both operators nevertheless have $\Delta t_{\max}/h^2=0.4517$ under RK4. Thus the recovered structure yields an energy certificate and SPD formulation, not a larger explicit step; the reference comparison remains norm-specific.

The same certificate determines which linear-algebra guarantees are available (Table~\ref{t:solver}). At $m=200$, $-Q_IL_I$ is symmetric positive definite for all four primary and three follow-up exact constructions, so conjugate gradients (CG) is mathematically applicable. None of the four MOLE/Corbino--Castillo systems meets both the weighted-symmetry and positivity conditions in its native quadrature. The table separates the error of the original discrete problem from that of the deliberately symmetrized surrogate used to interpret a nonzero residual.

\begin{table}[!htbp]
\caption{\textbf{A certified symmetrizing norm determines the available linear-algebra guarantees.} We solve the Dirichlet Poisson problem at $m=200$ with $u(x)=\sin(\pi x)+0.25\sin(7\pi x)$. The columns report the weighted-symmetry residual $r_Q$, the smallest eigenvalue $\lambda_{\min}$ of $\operatorname{sym}(-Q_IL_I)$, and the errors for the original system and the deliberately symmetrized surrogate. CG is applicable only when $r_Q<10^{-10}$ and $\lambda_{\min}>0$.}
\label{t:solver}
\centering
\footnotesize
\setlength{\tabcolsep}{3.6pt}
\renewcommand{\arraystretch}{1.08}
\begin{tabular*}{\textwidth}{@{\extracolsep{\fill}}lrrcrr@{}}
\toprule
Operator & $r_Q$ & $\lambda_{\min}$ & CG & \shortstack[c]{Original\\error} & \shortstack[c]{Symmetric-part\\error} \\
\midrule
Exact constructions (7) & $\leq1.2\times10^{-16}$ & $+0.0493$ & yes & $9.4\times10^{-9}$--$5.9\times10^{-6}$ & \multicolumn{1}{c}{same range} \\
\midrule
MOLE/CC order 2 & $7.20\times10^{-2}$ & $+0.0493$ & no & $2.41\times10^{-4}$ & $1.05\times10^{-3}$ \\
MOLE/CC order 4 & $1.49\times10^{-1}$ & $+0.0489$ & no & $3.47\times10^{-7}$ & $2.41\times10^{-2}$ \\
MOLE/CC order 6 & $5.22\times10^{-1}$ & $-26.00$ & no & $1.00\times10^{-9}$ & $2.78\times10^{-1}$ \\
MOLE/CC order 8 & $8.90\times10^{-1}$ & $-844.87$ & no & $6.15\times10^{-12}$ & $3.69\times10^{-1}$ \\
\bottomrule
\end{tabular*}

\vspace{4pt}
\footnotesize The seven exact constructions require 114--120 CG iterations, and their original-system and symmetric-part errors agree to the displayed precision. GMRES and BiCGSTAB results for the original reference systems are archived, but iteration counts are not compared across solver families.
\end{table}

For the order-six reference, CG applied deliberately to $\operatorname{sym}(-Q_IL_I)$ converges to a relative residual of $5.0\times10^{-11}$ but returns a solution in error by $0.278$. A dense direct solve of the original nonsymmetric system gives $1.00\times10^{-9}$ manufactured-solution error, and GMRES gives $1.01\times10^{-9}$. The discrepancy is therefore introduced by symmetrization, not by the reference discretization or by a particular nonsymmetric solver. This forced-CG calculation is a diagnostic, not a recommended solver or a speed comparison. Its implication is the availability of SPD guarantees---energy-norm convergence bounds, a three-term fixed-memory recurrence, and symmetric preconditioning---not an inability to solve the reference. For the order-six reference, $\lambda_{\min}=-26.0$ and $h^2\mu_{\max}=0.328$ are two views of the same quadratic-form obstruction. At $\Delta t/h^2=5$, the weighted backward-Euler system is SPD for every exact construction and for none of the four references under their native quadratures.

\section{Discussion}
The central contribution is a division of labor between generative search and mathematical authority. The LLM does not derive the accepted coefficients, prove positivity, identify physical eigenmodes, or certify exact identities. It chooses representation-level features that are expensive to enumerate: norm class, boundary symmetry, row-dependent supports, and optimization objective. The deterministic compiler and verifier then decide whether those choices define a valid operator. The observed increase from 13.3\% random yield to 55.0\% under full feedback shows that verifier information can guide models toward denser feasible regions in this grammar.

The strongest individual operator nevertheless came from the lower-yield structure-only arm, whose aggregate rate did not significantly exceed random search. This recurring tension is informative. Archive-rich feedback appears to improve the \emph{reliability} of producing acceptable constructions, whereas lower-information conditions can still generate exceptional individual objects. Best-object quality and search-policy yield are different endpoints.

Several limitations bound the interpretation. Model-role schedules were generated separately across archive conditions, so the primary feedback comparison is not fully causal. The expert-informed prior embeds substantial numerical-analysis knowledge and remains the strongest trajectory-level control. The surrogate is intentionally lightweight and should not be read as a definitive Bayesian-optimization baseline. The grammar is non-enumerable but still constrained by an empirical boundary-order preflight rule. The novelty screen is topology-scoped. No screened order-six candidate matches the loaded MOLE/Corbino--Castillo reference under canonical corner-block or fixed-length fingerprint comparison; the nearest same-order corner block remains 0.608 away and has unequal shape. This closes the Corbino--Castillo item in the partial library. Parameterized or compact higher-order Castillo--Grone extensions and generalized/block-norm staggered families remain outstanding, while nodal SBP families require an explicit topology map. We therefore report exact LLM-originated candidates, not previously unknown operator families.

The study is also one-dimensional and uniform-grid, and its block-positive cone is a sufficient inner approximation rather than the full banded semidefinite cone. A definitive next campaign should pair model-role schedules across conditions, separate archive metrics from repair text in a completed factorial design, enforce diversity after duplicate proposals, strengthen the surrogate with a feasibility model, and compare promoted operators against a complete novelty library and broader PDE suite.

Within these bounds, the mathematical result is clear. Changing the closure and norm architecture can recover exact weighted structure and a real physical spectrum at high interior order. The LLM's scientific role is specific and falsifiable: it proposes structural regions worth compiling. The mathematical claim begins only after deterministic verification and exact reconstruction. More broadly, the propose--compile--verify--exactify pattern applies whenever models can explore representational choices while validity is enforced by inexpensive deterministic or symbolic tests.

\section{Methods}
\subsection{Program representation and deterministic compilation}
A proposal specifies the target interior order, left and right boundary order, diagonal or positive local block norm, reflected or independent boundary closure, number of boundary rows, one support width per row, norm bandwidth, coefficient bounds, positivity margin, and one of four objectives: maximize norm margin, minimize coefficient $\ell_1$ norm, minimize the next unresolved moment, or a weighted hybrid. Continuous objective weights make the program space non-enumerable.

For a fixed program, we use transformed variables
\begin{equation}
A=QD,\qquad C=PG,
\end{equation}
so that the endpoint-supported extended-Gauss relation becomes the linear constraint
\begin{equation}
A+C^T=B_0,\qquad B_0=-e_1e_1^T+e_{m+2}e_{m+1}^T.
\end{equation}
Polynomial exactness is imposed as linear moment equations on shared boundary templates across several meshes. Diagonal norms use positive scalar weights. Local block norms use the LP-compatible cone
\begin{equation}
Q_b=\tau I+\sum_o \alpha_o v_ov_o^T,\qquad \tau>0,\ \alpha_o\ge0,
\end{equation}
with localized vectors $v_o$ respecting the proposed bandwidth; $P$ is parameterized analogously. This is a sufficient inner approximation of the block positive-definite cone. After solving the LP, $D=Q^{-1}A$ and $G=P^{-1}C$ are instantiated on design and holdout meshes.

\subsection{Independent verifier and exactification}
The numerical verifier first checks dimensions, finite coefficients, support, shared-template transfer, derivative moments, the two vector conservation identities, the endpoint-supported full matrix identity, and norm eigenvalue margins. For the Dirichlet block it forms a Cholesky factor $R^TR=Q_I$ and verifies symmetry of $H=RL_IR^{-1}$. Computed eigenvalues are assigned to the first five exact values $-(j\pi)^2$ by minimum-cost matching, while additional eigenvalues in the physical low-frequency window are counted separately.

Two manufactured Dirichlet problems are evaluated inside the search loop: $u(x)=\sin(\pi x)$ and $u(x)=\sin(\pi x)+0.1\sin(7\pi x)$. The primary external endpoint is fixed for every arm: compilation and algebraic gates must pass; no non-real or extra low modes are allowed; $\cond(L_I)h^2\le2$; first-mode and first-five-mode relative errors must not exceed 0.05 and 0.20; low- and mixed-frequency RMS errors must not exceed $2\times10^{-2}$ and $10^{-1}$; and the mixed-frequency rate must be at least $0.55$ times the smaller boundary order. These thresholds are proposer-independent.

Selected floating-point solutions are reconstructed in the complete coupled affine system, not coefficient by coefficient. An independent rational equation basis and reduced-row-echelon parameterization are computed; only free coordinates are placed on a common dyadic grid, pivot coordinates are solved exactly, and every original equation is rechecked. Positive-definiteness of each exact norm block is certified by its leading principal minors. The exact operator must then pass the numerical verifier on unseen meshes.

The partial novelty screen compares same-order staggered operators after canonicalizing scale, sign, reflection, and unequal closure widths. The loaded MOLE arrays are attributed to Corbino--Castillo by the pinned upstream source and compared directly. Parameterized higher-order Castillo--Grone and generalized/block-norm staggered families remain incomplete; cross-topology comparison requires an explicit degree-of-freedom map.

For the heat equation, the weighted-symmetry residual is $\lVert Q_IL_I-L_I^TQ_I\rVert_2/\lVert Q_IL_I\rVert_2$, the relative logarithmic $Q$-energy rate is the largest generalized eigenvalue of $(L_I^TQ_I+Q_IL_I,Q_I)$, and finite-time amplification is $\lVert Q_I^{1/2}e^{tL_I}Q_I^{-1/2}\rVert_2^2$. RK2 and RK4 limits are obtained by bisection on their stability polynomials over the complete spectrum. These diagnostics are downstream tests and do not enter the search objective.

Linear-solver consequences were evaluated at $m=200$ using $u(x)=\sin(\pi x)+0.25\sin(7\pi x)$. Writing $\operatorname{sym}(A)=(A+A^T)/2$, CG was declared applicable only when $-Q_IL_I$ had weighted-symmetry residual below $10^{-10}$ and $\lambda_{\min}(\operatorname{sym}(-Q_IL_I))>0$. CG on the symmetric part of a nonsymmetric weighted system was run only as a diagnostic; dense direct solves, GMRES, and BiCGSTAB were applied to the original matrix. Backward Euler used $\Delta t/h^2=5$. Iteration counts were archived but not compared across solver families because work, memory, restart, preconditioning, and numerical-library details differ.

\subsection{Search protocol and statistics}
The reference MOLE/Corbino--Castillo operators of orders $2,4,6,8$ were reproduced before search. The campaign evaluated 1,200 unique deterministic solver calls: ten seeds of 24 calls for uniform random search, an ExtraTrees mean-plus-uncertainty surrogate, and an expert-informed construction prior; and four live LLM archive conditions with ten seeds of 12 calls each. The LLM conditions were no archive, structure only, full metrics, and illumination. Three served models were retained by the capability probe: GLM-5.2, MiniMax-M3, and Cosmos3-Super-Reasoner. A fourth model was excluded by a parser that could select an echoed request schema rather than the terminal answer; this reflects the harness, not a demonstrated model limitation.

The primary endpoint is the per-call external-verifier rate; confidence intervals resample whole seed-level arms. Each strategy is compared with uniform random search by exact paired sign-flip randomization over all $2^{10}$ assignments of the observed differences, retaining their magnitudes; raw $p$-values are multiples of $1/1024$, followed by Holm correction. A secondary endpoint integrates best-so-far external utility over the first 12 calls. Token and latency totals include malformed, duplicate, rejected, and repair calls. Exact software versions and the Nebius structured-output endpoint \cite{nebius2026api} are recorded in the GitHub release.

\section*{Data availability}
The repository \url{https://github.com/drdecurto/mimetic_evolve} contains the primary live-run archive, raw model generations, deterministic solver records, exact certificates, operator arrays, derived tables, and figures.

\section*{Code availability}
The same repository contains the self-contained notebook, postprocessing and release-verification scripts, and code that recomputes provider-transport, program-diversity, missing-as-failure, completion-stratified, schedule, factorial, prediction-coverage, exact-candidate, semidiscrete-energy, and linear-solver-consequence audits.

\backmatter
\bibliography{references}

\par\smallskip\noindent\textbf{Acknowledgements.}
The authors thank the developers of MOLE and the open scientific Python ecosystem. Funding was provided by the Luxembourg Institute of Science and Technology through `ADIALab-MAST' and `LLMs4EU' (Grant Agreement No.~101198470), and by the Barcelona Supercomputing Center through `TIFON' (MIG-20232039).

\par\smallskip\noindent\textbf{Author contributions.}
J.d.C. conceived the study, developed the software and pipeline, performed the experiments and analyses, and drafted the manuscript. I.d.Z. contributed the mathematical formulation, validation, interpretation, and manuscript revision, as well as performed the experiments and analyses. Both authors approved the final manuscript.

\par\smallskip\noindent\textbf{Competing interests.}
The authors declare no competing interests. \textbf{Correspondence.} Correspondence should be addressed to J. de Curt\`o.

\end{document}